\newtheorem{theorem}{Theorem}[section]
\newtheorem{corollary}[theorem]{Corollary}
\newtheorem*{H-S}{Theorem (Hovey-Sadofsky)}
\newtheorem{remark}{Remark}
\newtheorem*{acknowledgements}{Acknowledgements}
\theoremstyle{definition}
\newtheorem{def-pr}[theorem]{Definition-Proposition}
\newtheorem{def-th}[theorem]{Definition-Theorem}
\newtheorem{def-cor}[theorem]{Definition-Corollary}
\newcommand{\Hom}{ \operatorname{Hom} }
\newcommand{\Spec}{ \operatorname{Spec} }
\newcommand{\disc}{ \operatorname{disc} }
\newcommand{\lcm}{ \operatorname{lcm} }
\newcommand{\ord}{ \operatorname{ord} }
\newcommand{\Z}{{ \mathbb Z}}
\newcommand{\F}{{ \mathbb F}}
\newcommand{\N}{{ \mathbb N}}
\newcommand{\Ocal}{ {\mathcal{O}} }
\newcommand{\Ab}{ {\mathfrak{Ab}} }
\newcommand{\dotbox}{\hbox to 1em{\hss.\hss}}
\begin{document}

\title{Meromorphicity of some deformed multivariable zeta functions for $F_1$-schemes}
\author{Norihiko Minami}
\address{Omohi College, Nagoya Institute of Technology, Gokiso, Showa-ku, Nagoya 466-8555}
\email{nori@nitech.ac.jp}

\maketitle

\begin{abstract}
Motivated by 
recent work of Deitmar-Koyama-Kurokawa \cite{DKK}, Kurokawa-Ochiai \cite{KO1} \cite{KO2}, Connes-Consani \cite{CC}, 
and the author \cite{M1} \cite{M2}, 
we define multivariable deformed zeta functions of Hurwitz-Igusa type
\[
\zeta^{HI}(s_1,\ldots,s_r; a_1,\ldots, a_r; w; X)\quad ( \Re(a_i) > 0,\ 1\leq\forall i\leq r)
\]
for a Noetherian $\F_1$-scheme $X$ in the sense of \cite{CC}.  
Our zeta functions 
generalize both the zeta functions studied in \cite{DKK} \cite{KO2} and the log derivative
of the modified Soul\'e type zeta function \cite{CC}.
For these zeta functions, we give an explicit presentation using the Hurwitz zeta function,
from which, 
we show
\begin{enumerate}
\item When $w\in \N$,
$\zeta^{HI}(s_1,\ldots,s_r; a_1,\ldots, a_r; w; X)$ 
is a meromorphic function of $s_1,\ldots,s_r$. 
\item When $a_1 = \cdots = a_r = 1$,
$\zeta^{HI}(z_1,\ldots,z_r; 1,\ldots, 1; w; X)$
is a meromorphic function of $s_1,\ldots,s_r,w$. 
\end{enumerate}

Our explict presentation, when restricted to the log derivative
of the modified Soul\'e type zeta functions, gives us the following
expression of the (generalized) Soul\'e zeta function \cite{S} \cite{CC} $\zeta_X(s)$ and
the modified zeta function $\zeta^{\disc}_X(s)$ for a Noetherian $\F_1$-scheme $X$:
\begin{equation*} \label{The expression}
\begin{split}
&\quad \zeta_X(s) = e^{h_1(s)} \zeta^{\disc}_X(s)  \\
&
= e^{h_2(s) } \prod_{p\in X}
\left(
\left(  \prod_{j=0}^{n(p)} (s-j)^{ \left( - \binom{ n(p) }{j}   (-1)^{ n(p) - j } \right) } \right)^{   
{\color{blue}\mu\left( \prod_j \Z/ m_j(p)\Z \right) } }
\right) ,
\end{split}
\end{equation*}
where, for each $p\in X$, 
$
\Ocal_{X,p}^{\times} 
= \Z^{n(p)}\times {\color{blue}\prod_j \Z/ m_j(p)\Z},$ 
and $h_1(s)$ and $h_2(s)$ are some entire functions, and, for a finite abelian group
$A = \prod_{j=1}^k\left(\Z/n_j\Z\right)$,
\begin{equation*}
{\color{blue}{\mu(A)}} := \sum_{a \in A} \frac{1}{ |a| } =
\frac{1}{{
{\lcm(n_1,n_2,\ldots,n_k)}}}\sum_{l =1}^{{
{\lcm(n_1,n_2,\ldots,n_k)}}} \gcd(l,n_1) \gcd(l,n_2)\cdots \gcd(l, n_k) .
\end{equation*}

Since ${\color{blue}{\mu(A)}}$ is not necessarily a natural number, but a rational number in general, 
${\color{blue}\mu\left( \prod_j \Z/ m_j(p)\Z \right) }$ may be regarded as a local contribution
at $p\in X$ of the obstruction for the \lq\lq rationallty\rq\rq\ of $\zeta_X(s)$ and $\zeta^{\disc}_X(s)$.

\end{abstract}


As is well-known \cite{S} \cite{K} \cite{D}, the original Soul\'e's zeta function \cite{S} for $\F_1$-schemes
(whose definition we follow \cite{CC} in this paper) 
can not be defined for arbitrary $\F_1$-schemes .
To overcome this dificulty, two completely different kinds of more general zeta functios for $\F_1$-schemes 
have been proposed recently; first by Deitmar-Koyama-Kurokawa \cite{DKK} and Kurokawa-Ochiai \cite{KO2}, and, second by  
Connes-Consani \cite{CC}.  To review these two approaches, we mostly assume our $\F_1$-scheme $X$
to be a Noetherian $\F_1$-scheme in the sennse of \cite{CC}. In particular, $X$ has finitely many points, and, $\forall p\in X$,
the \lq\lq residue field\rq\rq\ $\Ocal_{X,p}^{\times}$ is a finitely generated abelian group.


We now recall the first kind of zeta functions due to \cite{DKK} \cite{KO2}:
Recently, Deitmar-Koyama-Kurokawa [DKK] defined {\it the absolue Igusa zeta function} $\zeta^I(s, X)$ for 
a Noetherian $\F_1$-scheme $X$ by
\begin{equation} \label{F1zeta}
\zeta^I(s, X) =  \sum_{m=1}^{\infty} \# X( \F_{1^m} ) m^{-s} = \sum_{m=1}^{\infty} \Big| \Hom\left( \Spec (\F_{1^m}), X \right) \Big| m^{-s},  
\end{equation}
generalizing the (single variable) group zeta function of Igusa type $\zeta_{\Ab}^I(s; A)$ for a finitely generated abelian group $A$
with the following properties:

\begin{itemize}
\item For the affine $\F_1$-scheme $\F_1[ A ]$ determined by a finitely generated abelian group $A,$
\begin{equation*}
\zeta^I(s, \Spec \F_1[A]) = \zeta_{\Ab}^I(s; A) := \sum_{m=1}^{\infty} \Big| \Hom_{\Ab}( A, \Z/mZ ) \Big| m^{-s} 
\end{equation*}
\item For $X$, 
a Noetherian $F_1$-scheme in the sense of Connes-Consani \cite{CC},
\begin{align*}
\zeta^I(s, X) &
=
\sum_{p\in X} \zeta^I\left(s, \Spec \F_1[ \Ocal_{X,p}^{\times} ] \right) \\
&= \sum_{p\in X} \zeta_{\Ab}^I \left( s; \Ocal_{X,p}^{\times} \right) 
= \sum_{p\in X}  \sum_{m=1}^{\infty} \Big| \Hom_{\Ab}(  \Ocal_{X,p}^{\times}, \Z/mZ ) \Big| m^{-s} 
\end{align*}
\end{itemize}
\cite{DKK} evaluated $\zeta_{\Ab}^I(s; A)$ for a general finitely generated abelian group $A$ in two different ways,
both of which are supposed to imply the meromorphicity of $\zeta_{\Ab}^I(s; A)$ with respect to $s$.
Although these computations in \cite{DKK} are erroneous, the mistakes were fixed in \cite{M2}.
In particular, this implies, $\zeta^I(s, X)$ is a meromorphic function of $s$. 

More recently, Kurokawa-Ochiai \cite{KO2} defined {\it the multivariable group zeta function of Igusa type}
$\zeta_{\Ab}^I(s_1,\ldots,s_r; A)$ for a finitely generated abelian group $A$:
\begin{equation} \label{mza}
\zeta_{\Ab}^I(s_1,\ldots,s_r; A) := \sum_{m_1, \ldots, m_r \geq 1} 
\Big| \Hom_{\Ab}( A, \Z/m_1\cdots m_r \Z ) \Big| m_1^{-s_1}\cdots m_r^{-s_r}
\end{equation}
and proved its meromorphicity
for the particular case when $A$ is the cyclic group.  Comparing \eqref{F1zeta} and \eqref{mza}, it is very natural to
define {\it the multivariable absolue Igusa zeta function} $\zeta^I(s_1,\ldots,s_r, X)$ for 
a Noetherian $\F_1$-scheme $X$ by
\begin{equation} \label{mF1zeta} 
\begin{split}
\zeta^I(s_1\ldots,s_r, X) &:=  \sum_{m_1,\cdots,m_r\geq 1} \# X( \F_{1^{m_1\cdots m_r}} ) m_1^{-s_1}\cdots m_r^{-s_r} \\
&= \sum_{m_1,\ldots, m_r\geq 1} \Big| \Hom\left( \Spec (\F_{1^{(m_1\cdots m_r)}}), X \right) \Big| m_1^{-s_1}\cdots m_r^{-s_r}   \\
&= \sum_{p\in X} \zeta_{\Ab}^I\left(s_1,\ldots, s_r; \Ocal_{X,p}^{\times}  \right)  \\
&:= \sum_{p\in X} \sum_{m_1,\cdots, m_r \geq 1}^{\infty} \Big| \Hom_{\Ab}( \Ocal_{X,p}^{\times}, \Z/m_1\cdots m_r\Z ) \Big|
m_1^{-s_1}\cdots m_r^{-s_r}
\end{split}
\end{equation}

Both \cite{DKK} and \cite{KO2} obtained some elementary number theoretical identities, using these 
zeta functions.  However, we \cite{M1}\cite{M2} gave purely elementary number theoretical proofs for these identities
and their generalizations from the view point of elementary probability theory.  In fact, 
from the view point of elementary probability theory,
it is vey natural to define and study 
{\it the {\color{red}{deformed}} multivariable zeta function of Igusa type}
$\zeta_{\Ab}^I(s_1, s_2, \ldots, s_r; {\color{red}{w}} ; A )$
for a finitely generated abelian group  $A,$ by
\begin{equation} \label{deformed_abelian}
\zeta_{\Ab}^I(s_1,\ldots, s_r; {\color{red}{w}} ; A ) :=
\sum_{m_1,\cdots, m_r \geq 1}^{\infty} \Big| \Hom_{\Ab}( A, \Z/m_1\cdots m_r\Z ) \Big|^{{\color{red}{w}}}
m_1^{-s_1}\cdots m_r^{-s_r} ,
\end{equation}
and {\it the {\color{red}{deformed}} multivariable zeta function of Igusa type}
$\zeta^I(s_1, s_2, \ldots, s_r; {\color{red}{w}} ; X )$
for  a Noetherian  $\F_1$-scheme $X,$ by
\begin{equation} \label{deformed}
\begin{split}
\zeta^I(s_1,\ldots, s_r; {\color{red}{w}} ; X ) &:=
\sum_{p\in X} \zeta_{\Ab}^I\left(s_1,\ldots, s_r; {\color{red}{w}} ;  \Ocal_{X,p}^{\times}  \right)  \\
&=\sum_{p\in X} \sum_{m_1,\cdots, m_r \geq 1}^{\infty} \Big| \Hom_{\Ab}( \Ocal_{X,p}^{\times}, \Z/m_1\cdots m_r\Z ) \Big|^{{\color{red}{w}}}
m_1^{-s_1}\cdots m_r^{-s_r} ,
\end{split}
\end{equation}
and study these properties.
Incidently, motivated by some study of Casimir energy of infinite symmetric groups \cite{KO1}, \cite{KO2} hinted
some {\it deformed} zeta function like \eqref{deformed_abelian}.
Although the analyticity of  
{the {\color{red}{deformed}} multivariable zeta function of Igusa type} is highly problematic, we shall prove in 
Corollary~\ref{deformedmeromorphicity} its meromorphicity with respect to $a_1, \ldots, a_r, {\color{red}{w}}$.

It should be pointed out that, for a finitely abelian group $A$, 
$\zeta_{\Ab}^I\left(s_1,\ldots, s_r; {\color{red}{w}} ;  A  \right)$ posseses the multivariable Euler product (c.f. \cite{BEL}):
\begin{equation} \label{EulerProduct}
\begin{split}
\zeta_{\Ab}^I\left(s_1,\ldots, s_r; {\color{red}{w}} ;  A  \right) &:=  
\sum_{m_1,\cdots, m_r \geq 1} \Big| \Hom_{\Ab}( A, \Z/m_1\cdots m_r\Z ) \Big|^{{\color{red}{w}}}
m_1^{-s_1}\cdots m_r^{-s_r}
\\
&=\prod_{p: \text{primes}} \sum_{k_1,\cdots, k_r \geq 0} \Big| \Hom_{\Ab}( A, \Z/p^{k_1+\cdots+ k_r}\Z ) \Big|^{{\color{red}{w}}}
p^{-(k_1s_1 +\cdots + k_rs_r)}
\end{split}
\end{equation}

We now recall the second kind of zeta functions due to \cite{CC}.
Generalizing the domain of definitions of 
\begin{equation} \label{NX}
N_X(n) := 
\# X( \F_{1^{n-1}} ) \quad ( n\in \Z_{\geqq 2} )
\end{equation}
to arbitrary $n\in [1, \infty)$ by the Nevanlinna theory,
Connes-Consani \cite{CC} generalized the Soul\'e zeta function $\zeta_X(s)$ to an arbitrary
Noetherian $\F_1$-scheme $X$ by
\begin{equation} \label{Soulezeta}
\frac{ \partial_s\zeta_X(s) }{ \zeta_X(s) } = - \int_{n\geq 1} N_X(n) n^{-s-1} du
\end{equation}

Furthermore, 
Connes-Consani \cite{CC} defined {\it the modified  Soul\'e 
zeta function} $\zeta^{\disc}_X(s)$
for a Noetherian $\F_1$-schme $X$ by
\begin{equation} \label{disczeta}
\begin{split}
\frac{ \partial_s\zeta^{\disc}_X(s) }{ \zeta^{\disc}_X(s) } &= - \sum_{n\geq 1} N_X(n) n^{-s-1}
= - \chi(X) 
- \sum_{m=1}^{\infty} \# X( \F_{1^m} ) (m+1)^{-s-1}  \\
&= - \chi(X) 
- \sum_{m=1}^{\infty} \Big| \Hom\left( \Spec (\F_{1^m}), X \right) \Big| (m+1)^{-s-1}, 
\end{split}
\end{equation}
where $\chi (X) := N_X(1)$ is called the Euler characteristic of $X$ \cite{S} \cite{K}.

Then, \cite{CC} related $\zeta_X(s)$ and $\zeta^{\disc}_X(s)$ by establishing the expression
\begin{equation} \label{equivalent}
\zeta_X(s) = e^{h(s)}\zeta^{\disc}_X(s)
\end{equation}
for some entire function $h(s)$.  
Thus, $\zeta^{\disc}_X(s)$ has the same singularities as $\zeta_X(s)$.

However, unlike \eqref{EulerProduct},
the Dirichlet series 
\begin{equation} \label{dirichlet-Hurwitz}
\begin{split}
\sum_{m=1}^{\infty} \# X( \F_{1^m} ) (m+1)^{-s-1}
&=  \sum_{m=1}^{\infty} \Big| \Hom\left( \Spec (\F_{1^m}), X \right) \Big| (m+1)^{-s-1}   \\
&=  \sum_{p\in X} \sum_{m=1}^{\infty} \Big| \Hom_{\Ab}\left( \Ocal_{X,p}^{\times}, \Z/m\Z \right) \Big| (m+1)^{-s-1}
\end{split}
\end{equation}
in \eqref{disczeta}
does not have an Euler product decomposition.  Therefore, we would also like to incorporate such a zeta function
without an Euler product decomposition into our study.
Especially, we are interested in
the essential ingredient in the characterization of
the modified zeta function of Soul\'e 
type $\zeta^{\disc}_X(s)$ :

\begin{equation} \label{mdirichlet}
 \sum_{m=1}^{\infty} \Big| \Hom\left( \Spec (\F_{1^m}), X \right) \Big| (m+1)^{-s-1}
=
\sum_{p\in X} \sum_{m=1}^{\infty} \Big| \Hom_{\Ab}\left( \Ocal_{X,p}^{\times}, \Z/m\Z \right) \Big| (m+1)^{-s-1}
\end{equation}

Now, with \eqref{deformed} and \eqref{mdirichlet} as our principal motivation, we define
{\it the deformed multivariable zeta function of Hurwitz-Igusa type}
$\zeta_{\Ab}^{HI}( s_1,\ldots, s_r; a_1, \ldots, a_r; {\color{red}{w}} ; A )$,
for a finitely generated abelian gropup $A$, by
\begin{equation} \label{deformedHIAb}
\begin{split}
&\qquad \zeta_{\Ab}^{HI}(s_1,\ldots, s_r; a_1, \ldots, a_r; {\color{red}{w}} ; A ) 
\\
&:= \sum_{m_1,\cdots, m_r \geq 1} \Big| \Hom_{\Ab}( A, \Z/m_1\cdots m_r\Z ) \Big|^{{\color{red}{w}}}
(m_1-1+a_1)^{-s_1}\cdots (m_r-1+a_r)^{-s_r}
\\
&\hspace{60mm} (\Re (a_i) > 0,\ 1\leq \forall i\leq r) ,
\end{split}
\end{equation}
and {\it the deformed multivariable zeta function of Hurwitz-Igusa type}
\newline
$\zeta^{HI}( s_1,\ldots, s_r; a_1, \ldots, a_r; {\color{red}{w}} ; X )$
for a Noetherian $\F_1$-scheme $X$, by
\begin{equation} \label{deformedHI}
\begin{split}
&\qquad \zeta^{HI}(s_1,\ldots, s_r; a_1, \ldots, a_r; {\color{red}{w}} ; X ) :=
\sum_{p\in X} \zeta_{\Ab}^{HI}\left(s_1,\ldots, s_r;  a_1, \ldots, a_r; {\color{red}{w}} ;  \Ocal_{X,p}^{\times}  \right)  \\
&=\sum_{p\in X} \sum_{m_1,\cdots, m_r \geq 1} \Big| \Hom_{\Ab}( \Ocal_{X,p}^{\times}, \Z/m_1\cdots m_r\Z ) \Big|^{{\color{red}{w}}}
(m_1-1+a_1)^{-s_1}\cdots (m_r-1+a_r)^{-s_r}
\\
&\hspace{60mm} (\Re (a_i) > 0,\ 1\leq \forall i\leq r) .
\end{split}
\end{equation}

Observe, for a Noetherian $\F_1$-scheme $X$,
\begin{subequations}
\begin{align}
\zeta^{HI}(s_1,\ldots, s_r; 1, \ldots, 1; {\color{red}{w}} ; X ) &=
\zeta^{I}(s_1,\ldots, s_r; {\color{red}{w}} ; X )   \label{deformedHI-I}   \\
\zeta^{HI}(s+1; 2; {\color{red}{1}}; X ) &=
\sum_{p\in X} \sum_{m=1}^{\infty} \Big| \Hom_{\Ab}\left( \Ocal_{X,p}^{\times}, \Z/m\Z \right) \Big| (m+1)^{-s-1}
\label{deformedHI-modified}
\end{align}
\end{subequations}

Of course, the analyticity of $\zeta^{HI}(s_1,\ldots, s_r; a_1, \ldots, a_r; {\color{red}{w}} ; X )$ is highly problematic,
and the main result of this paper investigates this issue:

\begin{theorem} \label{MT}
By use of the  Hurwitz zeta function
$\zeta(s, q ) := \sum_{n\geq 0} (n+q)^{-s}\ (\Re (s) >1, \Re(q) > 0)$, 
the multivariable deformed zeta function of Hurwitz-Igusa type for a Noetherian $\F_1$-scheme $X$ 
\[
\zeta^{HI}(s_1,\ldots,s_r; a_1,\ldots, a_r; w; X)\quad ( \Re(a_i) > 0,\ 1\leq\forall i\leq r)
\]
admits the following explicit presentation: 
\begin{equation} \label{explictMT}
\begin{split}
&\qquad \zeta^{HI}(s_1,\ldots,s_r; a_1,\ldots, a_r; w; X) 
\\
&= \sum_{p\in X} \zeta_{\Ab}^{HI}(s_1,\ldots,s_r; a_1,\ldots, a_r; w; \Ocal_{X,p} ) 
 \\
&= 
\begin{cases}
 \sum_{p\in X} \sum_{j_1,\ldots,j_r=0}^{n(p){{\color{red}{w}}}} 
\left[
\prod_{i=1}^r  
\left(  
 \binom{ n(p){{\color{red}{w}}} }{j_i}   (1-a_i)^{ n(p){{\color{red}{w}}} - j_i }
\right)
\right]
&  \\
\hspace{2mm}
\times \left[
\sum_{k_1,\ldots,k_r=1}^{l(p)} 
\Big| \Hom_{\Ab}( \Gamma_p, \Z/k_1\cdots k_r\Z ) \Big|^{{\color{red}{w}}}  
\prod_{i=1}^r 
\left( l(p)^{ - (s_i - j_i) } \zeta\left(  s_i-j_i, \frac{k_i-1+a_i}{l(p)} \right) \right) \right]  &  \\
\ &\hspace{-35mm}\text{if}\ w\in \N  \\
\sum_{p\in X}
\Big| \Hom_{\Ab}( \Gamma_p, \Z/k_1\cdots k_r\Z ) \Big|^{{\color{red}{w}}}  
\prod_{i=1}^r \left( l(p)^{ - (s_i - n(p){\color{red}{w}}) }
\zeta\left(  s_i-n(p){\color{red}{w}}, \frac{k_i}{l(p)} \right) \right)  & \\
 &\hspace{-35mm}\text{if}\ a_1 = \cdots = a_r = 1  ,\\
\end{cases}
\end{split}
\end{equation}
where, $\forall p\in X$, $\Ocal_{X,p}^{\times} = \Z^{n(p)}\times \Gamma_p$ with $\Gamma_p$
a finite abelian group such that $l(p) = \lcm \{ \ord (g) \mid g\in \Gamma \}$.
In particular,
$
\zeta^{HI}(s_1,\ldots,s_r; a_1,\ldots, a_r; w; X)\quad ( \Re(a_i) > 0,\ 1\leq\forall i\leq r)
$ 
enjoys the following meromorphicities:
\begin{enumerate}
\item When $w\in \N$,
$\zeta^{HI}(s_1,\ldots,s_r; a_1,\ldots, a_r; w; X)$ 
is a meromorphic function of $s_1,\ldots,s_r$. 
\item When $a_1 = \cdots = a_r = 1$,
$\zeta^{HI}(z_1,\ldots,z_r; 1,\ldots, 1; w; X)$
is a meromorphic function of $s_1,\ldots,s_r,w$. 
\end{enumerate}

\end{theorem}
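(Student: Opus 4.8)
The plan is to reduce everything to a single local computation at a point $p\in X$, since $\zeta^{HI}(s_1,\ldots,s_r;a_1,\ldots,a_r;w;X)$ is by definition a finite sum over $p\in X$ (finiteness of $X$ is part of the Noetherian hypothesis), and a finite sum of meromorphic functions is meromorphic. So the whole theorem rests on analyzing $\zeta_{\Ab}^{HI}(s_1,\ldots,s_r;a_1,\ldots,a_r;w;\Ocal_{X,p}^{\times})$ for a single finitely generated abelian group $A = \Ocal_{X,p}^{\times} = \Z^{n}\times\Gamma$. The first step is to disentangle the two tensor factors: for a cyclic target $\Z/N\Z$ one has $|\Hom_{\Ab}(\Z^n\times\Gamma,\Z/N\Z)| = N^n\cdot|\Hom_{\Ab}(\Gamma,\Z/N\Z)|$, and $|\Hom_{\Ab}(\Gamma,\Z/N\Z)|$ depends only on $N$ through $\gcd$-type data and in particular is periodic in $N$ with period $l = l(p) = \lcm\{\ord(g)\mid g\in\Gamma\}$. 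Writing $N = m_1\cdots m_r$ and then substituting $m_i = l q_i + k_i$ with $1\le k_i\le l$ and $q_i\ge 0$ is the key bookkeeping move: it replaces the sum over $m_i\ge 1$ by a sum over the finitely many residues $k_i$ together with sums over $q_i\ge 0$, and because $|\Hom_{\Ab}(\Gamma,\Z/N\Z)|$ is $l$-periodic in each $m_i$ it factors out of the $q_i$-sums.

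The second step handles the factor $N^{nw}m_i^{-s_i}$-type terms. After the substitution, each inner sum over $q_i\ge 0$ has the shape $\sum_{q_i\ge 0}(lq_i + k_i - 1 + a_i)^{-s_i}\cdot(\text{polynomial in }(lq_i+k_i))$, where the polynomial comes from expanding $N^{nw} = \prod_i m_i^{nw}$ (when $w\in\N$ so that $nw\in\N$) via the binomial theorem $m_i^{nw} = ((m_i - 1 + a_i) + (1 - a_i))^{nw} = \sum_{j_i}\binom{nw}{j_i}(m_i - 1 + a_i)^{j_i}(1-a_i)^{nw - j_i}$. This is exactly the source of the binomial coefficients and the $(1-a_i)^{n(p)w - j_i}$ factors in the stated formula. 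Each resulting elementary sum $\sum_{q\ge 0}(lq + c)^{-(s_i - j_i)}$ is, after pulling out $l^{-(s_i - j_i)}$, a Hurwitz zeta value $l^{-(s_i-j_i)}\zeta(s_i - j_i, c/l)$ with $c = k_i - 1 + a_i$; the condition $\Re(a_i) > 0$ guarantees $\Re(c/l) > 0$ so the Hurwitz zeta is defined and — crucially — is meromorphic in its first argument with its only pole the known simple pole at argument $1$. Assembling these pieces over the finitely many choices of $j_1,\ldots,j_r$ and $k_1,\ldots,k_r$ and over $p\in X$ produces the first branch of \eqref{explictMT}, and meromorphicity in $s_1,\ldots,s_r$ follows immediately since we have written the function as a finite $\C$-linear combination of products $\prod_i \zeta(s_i - j_i, \cdot)$.

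For the second assertion, specialize $a_1 = \cdots = a_r = 1$. Now $m_i - 1 + a_i = m_i$, the binomial expansion collapses ($(1-a_i) = 0$ kills every term except $j_i = nw$), and one is left with $\sum_{q_i\ge 0}(lq_i + k_i)^{-(s_i - nw)} = l^{-(s_i - nw)}\zeta(s_i - nw, k_i/l)$ — but now we have \emph{not} used $w\in\N$ anywhere: the exponent $s_i - n(p)w$ appears directly as the first argument of a Hurwitz zeta function, and $\zeta(z, q)$ is jointly meromorphic in $z$ (entire in $q$ for $\Re(q) > 0$). Since the dependence on $w$ enters only through the analytic (indeed affine-linear) substitution $z_i \mapsto s_i - n(p)w$ inside finitely many Hurwitz zeta functions, and through the exponent $w$ in $|\Hom_{\Ab}(\Gamma_p,\Z/k_1\cdots k_r\Z)|^{w}$ — which is $e^{w\log(\text{positive integer})}$, an entire function of $w$ — the resulting finite sum is meromorphic in $(s_1,\ldots,s_r,w)$ jointly.

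The main obstacle is the middle step: justifying the interchange of summation (over $m_i$, or equivalently over $q_i$ and residues $k_i$) with the binomial expansion, and doing so on a region where everything converges absolutely, before appealing to analytic continuation. Concretely one must check that for $\Re(s_i)$ large (and $\Re(a_i) > 0$, $w$ fixed — or $\Re(w)$ bounded for the joint statement) the original multiple Dirichlet series converges absolutely — here the only subtlety is the factor $|\Hom_{\Ab}(A,\Z/N\Z)|^{w}$, which grows at most polynomially in $N$ since $|\Hom_{\Ab}(A,\Z/N\Z)|\le N^{\,\text{rank}(A/\text{torsion})}\cdot|\Gamma| \le C\,N^{n}$, so $|\Hom_{\Ab}(A,\Z/N\Z)|^{w}\le C^{w}N^{n\Re(w)}$ and the series converges for $\Re(s_i)$ sufficiently large — and then that the rearrangement into the closed form is legitimate on that domain, after which the identity \eqref{explictMT} propagates by uniqueness of analytic/meromorphic continuation. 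Everything else is the routine bookkeeping of the substitution $m_i = lq_i + k_i$ and the definition of the Hurwitz zeta function, which I will not grind through here.
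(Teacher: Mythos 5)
Your proposal is correct and follows essentially the same route as the paper: the same local reduction to $A=\Z^{n}\times\Gamma$, the same $l$-periodicity of $\big|\Hom_{\Ab}(\Gamma,\Z/m_1\cdots m_r\Z)\big|$ with the substitution $m_i=lq_i+k_i$, the same binomial expansion of $m_i^{nw}=\left((m_i-1+a_i)+(1-a_i)\right)^{nw}$ for $w\in\N$, and the same collapse to $m_i^{-(s_i-nw)}$ when $a_1=\cdots=a_r=1$, everything being assembled into finitely many Hurwitz zeta factors. Your added justification of absolute convergence (via the polynomial bound on $\big|\Hom_{\Ab}(A,\Z/N\Z)\big|$) to legitimize the rearrangements is a welcome explicit step that the paper leaves implicit, but it does not constitute a different method.
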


\begin{proof} It suffices to study 
$\zeta_{\Ab}^{HI}(s_1,\ldots, s_r; a_1, \ldots, a_r; {\color{red}{w}} ; A )$
for a finitely generated abelian group $A= \Z^n \times \Gamma,$
where $\Gamma$ is a finite abelian group with
$l := \lcm \{ \ord (g) \mid g\in \Gamma \}$:
For $m_i \in \N$, set
\begin{equation} \label{G}
G(m_1,\ldots,m_r) := \Big| \Hom_{\Ab}( \Gamma, \Z/m_1\cdots m_r\Z ) \Big| ,
\end{equation}
and write $m_i = ln_i + k_i$ with $n_i\in \Z_{\geq 0}$ and $1\leq k_i \leq l$.
Then,
\begin{equation} \label{periodic}
G(m_1,\ldots,m_r) =  G(k_1,\ldots,k_r).
\end{equation}

Now, since
{
\begin{align*}
&\ \quad \Big| \Hom_{\Ab}( A, \Z/m_1\cdots m_r\Z ) \Big|^{{\color{red}{w}}}
(m_1-1+a_1)^{-s_1}\cdots (m_r-1+a_r)^{-s_r}
\\
&= \Big| \Hom_{\Ab}( \Gamma\times \Z^n, \Z/m_1\cdots m_r\Z ) \Big|^{{\color{red}{w}}}
(m_1-1+a_1)^{-s_1}\cdots (m_r-1+a_r)^{-s_r}
\\
&= G(m_1,\ldots,m_r) ^{{\color{red}{w}}} (m_1\cdots m_r)^{n{{\color{red}{w}}}} (m_1-1+a_1)^{-s_1}\cdots (m_r-1+a_r)^{-s_r}
\\
&= G(m_1,\ldots,m_r) ^{{\color{red}{w}}} \prod_{i=1}^r \left(  m_i^{n{{\color{red}{w}}}} (m_i-1+a_i)^{-s_i}   \right)
\\
&= 
\begin{cases}
G(m_1,\ldots,m_r) ^{{\color{red}{w}}} &  \\
\hspace{10mm}
\times \prod_{i=1}^r  \left(  (m_i-1+a_i) + (1-a_i) \right)^{n{{\color{red}{w}}}} (m_i-1+a_i)^{-s_i}
\ &\text{if}\ w\in \N  \\
G(m_1,\ldots,m_r) ^{{\color{red}{w}}} \prod_{i=1}^r \left(  m_i^{n{{\color{red}{w}}}} m_i^{-s_i}   \right)
\ &\text{if}\ a_1 = \cdots = a_r = 1\\
\end{cases}
\\
&= 
\begin{cases}
G(m_1,\ldots,m_r) ^{{\color{red}{w}}}  &  \\
\hspace{5mm}
\times \prod_{i=1}^r  
\left(  
\sum_{j_i=0}^{n{{\color{red}{w}}}} \binom{ n{{\color{red}{w}}} }{j_i}   (m_i-1+a_i)^{j_i} (1-a_i)^{ n{{\color{red}{w}}} - j_i }
\right)
 (m_i-1+a_i)^{-s_i}
\ &\text{if}\ w\in \N  \\
G(m_1,\ldots,m_r) ^{{\color{red}{w}}} \prod_{i=1}^r \left(  m_i^{-(s_i-n{{\color{red}{w}}} ) }   \right)
\ &\text{if}\ a_1 = \cdots = a_r = 1\\
\end{cases}
\\
&= 
\begin{cases}
\sum_{j_1,\ldots,j_r=0}^{n{{\color{red}{w}}}} \prod_{i=1}^r  
\left(  
 \binom{ n{{\color{red}{w}}} }{j_i}   (1-a_i)^{ n{{\color{red}{w}}} - j_i }
\right)
 &  \\
\hspace{10mm}
\times 
G(m_1,\ldots,m_r) ^{{\color{red}{w}}} 
\prod_{i=1}^r  
\left(  
 (m_i-1+a_i)^{-(s_i-j_i)}
\right)
\ &\text{if}\ w\in \N  \\
G(m_1,\ldots,m_r) ^{{\color{red}{w}}} \prod_{i=1}^r \left(  m_i^{-(s_i-n{{\color{red}{w}}} ) }   \right)
\ &\text{if}\ a_1 = \cdots = a_r = 1  ,\\
\end{cases}
\end{align*}
}
we see
{
\begin{equation} \label{reduction}
\begin{split}
&\qquad \zeta_{\Ab}^{HI}(s_1,\ldots, s_r; a_1, \ldots, a_r; {\color{red}{w}} ; A ) 
\\
&= \sum_{m_1,\cdots, m_r \geq 1} \Big| \Hom_{\Ab}( A, \Z/m_1\cdots m_r\Z ) \Big|^{{\color{red}{w}}}
(m_1-1+a_1)^{-s_1}\cdots (m_r-1+a_r)^{-s_r}
\\
&= 
\begin{cases}
 \sum_{j_1,\ldots,j_r=0}^{n{{\color{red}{w}}}} 
\prod_{i=1}^r  
\left(  
 \binom{ n{{\color{red}{w}}} }{j_i}   (1-a_i)^{ n{{\color{red}{w}}} - j_i }
\right)  & \\
\hspace{10mm}
\times \sum_{m_1,\cdots, m_r \geq 1}
G(m_1,\ldots,m_r) ^{{\color{red}{w}}} 
\prod_{i=1}^r  
\left(  
 (m_i-1+a_i)^{-(s_i-j_i)}
\right)
\ &\text{if}\ w\in \N  \\
\sum_{m_1,\cdots, m_r \geq 1} G(m_1,\ldots,m_r) ^{{\color{red}{w}}} \prod_{i=1}^r \left(  m_i^{-(s_i-n{{\color{red}{w}}} ) }   \right)
\ &\text{if}\ a_1 = \cdots = a_r = 1  .\\
\end{cases}
\end{split}
\end{equation}
}
Thus,
\begin{equation} \label{evaluation}
\begin{split}
&\quad\ \sum_{m_1,\cdots, m_r \geq 1}
G(m_1,\ldots,m_r) ^{{\color{red}{w}}} 
\prod_{i=1}^r  
\left(  
 (m_i-1+a_i)^{-(s_i-j_i)}
\right)
\\
&\overset{\eqref{periodic}}= \sum_{k_1,\ldots,k_r=1}^l \sum_{ n_1, \ldots, n_r \geq 0} G(k_1,\ldots,k_r) ^{{\color{red}{w}}} 
\prod_{i=1}^r  
\left(  
 (ln_i+k_i-1+a_i)^{-(s_i-j_i)}
\right)
\\
&= l^{ -\sum_{i=1}^r (s_i - j_i) } \sum_{k_1,\ldots,k_r=1}^l G(k_1,\ldots,k_r) ^{{\color{red}{w}}}  
\sum_{ n_1, \ldots, n_r \geq 0}
\prod_{i=1}^r 
\left(  
 \left(n_i+\frac{k_i-1+a_i}{l}\right)^{-(s_i-j_i)}
\right)
\\
&= l^{ -\sum_{i=1}^r (s_i - j_i) } \sum_{k_1,\ldots,k_r=1}^l G(k_1,\ldots,k_r) ^{{\color{red}{w}}}  
\prod_{i=1}^r 
\sum_{n_i\geq 0} 
\left(  
 \left( n_i+\frac{k_i-1+a_i}{l} \right)^{-(s_i-j_i)}
\right)
\\
&= l^{ -\sum_{i=1}^r (s_i - j_i) } \sum_{k_1,\ldots,k_r=1}^l G(k_1,\ldots,k_r) ^{{\color{red}{w}}}  
\prod_{i=1}^r 
\zeta\left(  s_i-j_i, \frac{k_i-1+a_i}{l} \right) 
\\
&= \sum_{k_1,\ldots,k_r=1}^l G(k_1,\ldots,k_r) ^{{\color{red}{w}}}  
\prod_{i=1}^r 
\left( l^{ - (s_i - j_i) } \zeta\left(  s_i-j_i, \frac{k_i-1+a_i}{l} \right) \right)
\end{split}
\end{equation}
and
\begin{equation} \label{evaluation2}
\begin{split}
&\quad \sum_{m_1,\cdots, m_r \geq 1} G(m_1,\ldots,m_r) ^{{\color{red}{w}}} \prod_{i=1}^r \left(  m_i^{-(s_i-n{{\color{red}{w}}} ) } \right)  \\
&\overset{\eqref{periodic}}= \sum_{k_1,\ldots,k_r=1}^l \sum_{ n_1, \ldots, n_r \geq 0} G(k_1,\ldots,k_r) ^{{\color{red}{w}}}  \prod_{i=1}^r\left(  (ln_i+k_i)^{-(s_i-n{{\color{red}{w}}} ) } \right)  \\
&= l^{ -\sum_{i=1}^r (s_i - n{\color{red}{w}}) } \sum_{k_1,\ldots,k_r=1}^l G(k_1,\ldots,k_r) ^{{\color{red}{w}}}  
\sum_{ n_1, \ldots, n_r \geq 0}
\prod_{i=1}^r 
\left(  
 \left(n_i+\frac{k_i}{l}\right)^{-(s_i-n{\color{red}{w}})}
\right)
\\
&= l^{ -\sum_{i=1}^r (s_i - n{\color{red}{w}}) } \sum_{k_1,\ldots,k_r=1}^l G(k_1,\ldots,k_r) ^{{\color{red}{w}}}  
\prod_{i=1}^r 
\sum_{ n_i \geq 0}
\left(  
 \left(n_i+\frac{k_i}{l}\right)^{-(s_i-n{\color{red}{w}})}
\right)
\\
&= l^{ -\sum_{i=1}^r (s_i - n{\color{red}{w}}) } \sum_{k_1,\ldots,k_r=1}^l G(k_1,\ldots,k_r) ^{{\color{red}{w}}}  
\prod_{i=1}^r 
\zeta\left(  s_i-n{\color{red}{w}}, \frac{k_i}{l} \right)
\\
&= \sum_{k_1,\ldots,k_r=1}^l G(k_1,\ldots,k_r) ^{{\color{red}{w}}}  
\prod_{i=1}^r \left( l^{ - (s_i - n{\color{red}{w}}) }
\zeta\left(  s_i-n{\color{red}{w}}, \frac{k_i}{l} \right) \right)
\end{split}
\end{equation}
are both meromorphic functions of $s_1,\ldots,s_r,w$.   



Then, substituting  \eqref{evaluation} and \eqref{evaluation2} into \eqref{reduction}, we find
{
\begin{equation} \label{reduction2}
\begin{split}
&\qquad \zeta_{\Ab}^{HI}(s_1,\ldots, s_r; a_1, \ldots, a_r; {\color{red}{w}} ; A ) 
\\
&= \sum_{m_1,\cdots, m_r \geq 1} \Big| \Hom_{\Ab}( A, \Z/m_1\cdots m_r\Z ) \Big|^{{\color{red}{w}}}
(m_1-1+a_1)^{-s_1}\cdots (m_r-1+a_r)^{-s_r}
\\
&= 
\begin{cases}
 \sum_{j_1,\ldots,j_r=0}^{n{{\color{red}{w}}}} 
\prod_{i=1}^r  
\left(  
 \binom{ n{{\color{red}{w}}} }{j_i}   (1-a_i)^{ n{{\color{red}{w}}} - j_i }
\right)
 l^{ -\sum_{i=1}^r (s_i - j_i) }   &  \\
\hspace{10mm}
\times
\sum_{k_1,\ldots,k_r=1}^l G(k_1,\ldots,k_r) ^{{\color{red}{w}}}  
\prod_{i=1}^r 
\zeta\left(  s_i-j_i, \frac{k_i-1+a_i}{l} \right)   \\
\ &\text{if}\ w\in \N  \\
l^{ -\sum_{i=1}^r (s_i - n{\color{red}{w}}) } \sum_{k_1,\ldots,k_r=1}^l G(k_1,\ldots,k_r) ^{{\color{red}{w}}}  
\prod_{i=1}^r 
\zeta\left(  s_i-n{\color{red}{w}}, \frac{k_i}{l} \right)
\ &\text{if}\ a_1 = \cdots = a_r = 1  .\\
\end{cases}
\\
&= 
\begin{cases}
 \sum_{j_1,\ldots,j_r=0}^{n{{\color{red}{w}}}} 
\left[
\prod_{i=1}^r  
\left(  
 \binom{ n{{\color{red}{w}}} }{j_i}   (1-a_i)^{ n{{\color{red}{w}}} - j_i }
\right)
\right]
&  \\
\hspace{10mm}
\times \left[
\sum_{k_1,\ldots,k_r=1}^l G(k_1,\ldots,k_r) ^{{\color{red}{w}}}  
\prod_{i=1}^r 
\left( l^{ - (s_i - j_i) } \zeta\left(  s_i-j_i, \frac{k_i-1+a_i}{l} \right) \right) \right] 
\ &\text{if}\ w\in \N  \\
\sum_{k_1,\ldots,k_r=1}^l G(k_1,\ldots,k_r) ^{{\color{red}{w}}}  
\prod_{i=1}^r \left( l^{ - (s_i - n{\color{red}{w}}) }
\zeta\left(  s_i-n{\color{red}{w}}, \frac{k_i}{l} \right) \right)
\ &\text{if}\ a_1 = \cdots = a_r = 1  .\\
\end{cases}
\end{split}
\end{equation}
}

Now the claim follows from these obsrevations and \eqref{reduction}.
\end{proof}


\begin{corollary} \label{deformedmeromorphicity}
The {\color{red}{deformed}} multivariable zeta function of Igusa type\newline
$\zeta^I(s_1, s_2, \ldots, s_r; {\color{red}{w}} ; X )$
for  a Noetherian  $\F_1$-scheme $X$:
\begin{equation} \label{deformed}
\begin{split}
\zeta^I(s_1,\ldots, s_r; {\color{red}{w}} ; X ) &:=
\sum_{p\in X} \zeta_{\Ab}^I\left(s_1,\ldots, s_r; {\color{red}{w}} ;  \Ocal_{X,p}^{\times}  \right)  \\
&:=\sum_{p\in X} \sum_{m_1,\cdots, m_r \geq 1}^{\infty} \Big| \Hom_{\Ab}( \Ocal_{X,p}^{\times}, \Z/m_1\cdots m_r\Z ) \Big|^{{\color{red}{w}}}
m_1^{-s_1}\cdots m_r^{-s_r}
\\
&=
\sum_{p\in X}
\Big| \Hom_{\Ab}( \Gamma_p, \Z/k_1\cdots k_r\Z ) \Big|^{{\color{red}{w}}}  
\prod_{i=1}^r \left( l(p)^{ - (s_i - n(p){\color{red}{w}}) }
\zeta\left(  s_i-n(p){\color{red}{w}}, \frac{k_i}{l(p)} \right) \right)
\end{split}
\end{equation}
is a meromorphic function of $s_1, \ldots, s_r, {\color{red}{w}}$.  
\end{corollary}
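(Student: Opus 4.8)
The plan is to obtain the corollary as the special case $a_1 = \cdots = a_r = 1$ of the Main Theorem. First I would invoke the identity \eqref{deformedHI-I}, which reads
\[
\zeta^I(s_1,\ldots,s_r; w; X) \;=\; \zeta^{HI}(s_1,\ldots,s_r; 1,\ldots,1; w; X),
\]
and simply note that the constant values $a_i = 1$ satisfy $\Re(a_i) > 0$, so that Theorem~\ref{MT} applies with these choices. Its conclusion \emph{(2)} is precisely the assertion that the right-hand side --- hence $\zeta^I(s_1,\ldots,s_r; w; X)$ --- is a meromorphic function of $s_1,\ldots,s_r,w$. In this sense there is nothing left to prove; what follows is only a record of why the one-variable input survives the passage to $r+1$ variables.

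For a self-contained check I would combine the identity above with the second branch ($a_1 = \cdots = a_r = 1$) of the explicit presentation \eqref{explictMT}. Because $X$ is a Noetherian $\F_1$-scheme it has only finitely many points $p$, and for each of them the sum over $k_1,\ldots,k_r$ runs over the finite box $1\leq k_i\leq l(p)$; thus $\zeta^I(s_1,\ldots,s_r; w; X)$ is a \emph{finite} $\C$-linear combination of products of the three building blocks
\[
\Big|\Hom_{\Ab}(\Gamma_p, \Z/k_1\cdots k_r\Z)\Big|^{w}, \qquad l(p)^{-(s_i - n(p)w)}, \qquad \zeta\!\left(s_i - n(p)w, \tfrac{k_i}{l(p)}\right).
\]
The first equals $e^{w\log N}$ with $N = |\Hom_{\Ab}(\Gamma_p, \Z/k_1\cdots k_r\Z)|$ a positive integer, so it is entire in $w$; the second equals $e^{-(s_i - n(p)w)\log l(p)}$, so it is entire in $(s_i,w)$; and for the third one uses the classical fact that, for each fixed $q$ with $\Re(q) > 0$, one has $\zeta(s,q) = \frac{1}{s-1} + h_q(s)$ with $h_q$ entire in $s$ --- here $q = k_i/l(p)$ is a fixed positive rational. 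Substituting the entire map $(s_1,\ldots,s_r,w)\mapsto s_i - n(p)w$ exhibits $\zeta(s_i - n(p)w, k_i/l(p))$ as the sum of the meromorphic function $\big(s_i - n(p)w - 1\big)^{-1}$ and the entire function $h_q(s_i - n(p)w)$, hence as a meromorphic function on $\C^{r+1}$ with polar set the affine hyperplane $\{\, s_i - n(p)w = 1 \,\}$. Since finite sums and finite products of meromorphic functions on $\C^{r+1}$ are again meromorphic, the corollary follows.

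The only step needing any care --- and hence the "main obstacle", modest though it is --- is this last promotion of meromorphy in the single variable $s$ (the form in which the Hurwitz zeta function enters Theorem~\ref{MT}) to joint meromorphy in $s_1,\ldots,s_r,w$; it goes through because the Hurwitz zeta function has a single pole at an explicit location with explicit principal part $\frac{1}{s-1}$, so that pulling back along an affine-linear map is transparently meromorphic. Equivalently one could note that the product over $i$ in \eqref{explictMT} is holomorphic away from the locally finite union of hyperplanes $\bigcup_{p,i}\{\, s_i - n(p)w = 1 \,\}$ with at worst first-order poles along it, and invoke Hartogs' extension theorem. All the remaining bookkeeping has already been done in the proof of Theorem~\ref{MT}.
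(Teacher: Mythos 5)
Your proposal is correct and follows essentially the same route as the paper: the paper's proof is the one-line observation that the corollary follows from Theorem~\ref{MT}~(2) together with the specialization \eqref{deformedHI-I} at $a_1=\cdots=a_r=1$. Your additional verification that the Hurwitz zeta factors $\zeta\bigl(s_i-n(p)w,\,k_i/l(p)\bigr)$ are jointly meromorphic in $(s_1,\ldots,s_r,w)$ (via the single pole of $\zeta(s,q)$ at $s=1$ pulled back along the affine map $s_i-n(p)w$) merely makes explicit what the paper asserts without detail inside the proof of Theorem~\ref{MT}, and is sound.
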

.
\begin{proof}  This immediately follows from Theorem~\ref{MT}~(2) and \eqref{deformedHI-I}.
\end{proof}

Now the special case of the {\color{red}{deformed}} modified zeta function of Soul\'e \cite{S} type 
$\zeta^{\disc}_X(s;{\color{red}{w}})$,
characterized by 
\begin{equation} \label{deformedSoule}
\frac{ \partial_s\zeta^{\disc}_X(s;{\color{red}{w}}) }{ \zeta^{\disc}_X(s;{\color{red}{w}}) } 
\equiv 
- \sum_{p\in X}
\sum_{m=1}^{\infty} \Big| \Hom_{\Ab}\left( \Ocal_{X,p}^{\times}, \Z/m\Z  \right) \Big|^{{\color{red}{w}}} (m+1)^{-s-1} ,
\end{equation}
modulo a constant, is of particular importance.

For this purpose, we recall from \cite{M2} the invariant $\mu(A)$  for a finite abelian group $A$ defined by
\begin{equation} \label{muA}
\mu (A) := \sum_{a \in A} \frac{1}{ |a| } ,
\end{equation}
where $|a|$ stands for the order of an element $a\in A$.
When
$A = \prod_{j=1}^k\left(\Z/n_j\Z\right)$,
the following evaluation is obtained in \cite{M2}:
\begin{equation} \label{mueval}
\begin{split}
\mu(A) &:= \sum_{a \in A} \frac{1}{ |a| } 
= \frac{1}{ |A| }\sum_{l=1}^{ |A| }  \Big|  \Hom_{\Ab}( A, \Z/l\Z ) \Big|  \\
&=
\frac{1}{{
{\lcm(n_1,n_2,\ldots,n_k)}}}\sum_{l =1}^{{
{\lcm(n_1,n_2,\ldots,n_k)}}}
\Big| \Hom_{\Ab}( A, \Z/l\Z ) \Big|  \\
&=
\frac{1}{{
{\lcm(n_1,n_2,\ldots,n_k)}}}\sum_{l =1}^{{
{\lcm(n_1,n_2,\ldots,n_k)}}} \gcd(l,n_1) \gcd(l,n_2)\cdots \gcd(l, n_k) ,
\end{split}
\end{equation}
whose last quantity was essentially first considered by \cite{DKK}.

When $A = \prod_{j=1}^k\left(\Z/n_j\Z\right)$ and ${\color{red}{w}}\in N$, 
$A^{\color{red}{w}} = \prod_{j=1}^k\left(\Z/n_j\Z\right)^{\color{red}{w}}$. Thus, applying \eqref{mueval} to the case 
$A^{\color{red}{w}}$, we obtain the following:
\begin{equation} \label{muevalw}
\mu(A^{\color{red}{w}}) = 
\frac{1}{ {\lcm(n_1,n_2,\ldots,n_k)} }\sum_{l=1}^{ {\lcm(n_1,n_2,\ldots,n_k)} }  
\Big|  \Hom_{\Ab}( A, \Z/l\Z ) \Big|^{\color{red}{w}} 
\end{equation}

Finally, we are able to state and prove our main result for 
the {\color{red}{deformed}} modified zeta function of Soul\'e \cite{S} type 
$\zeta^{\disc}_X(s;{\color{red}{w}})$:

\begin{theorem} \label{modified meromorphicity}
We have the following expression for the {\color{red}{deformed}} modified zeta function of Soul\'e \cite{S} type
for a Noetherian $\F_1$-scheme $X$
when ${\color{red}{w}}\in \N$:
\begin{equation} \label{The expression}
\begin{split}
&\quad \zeta^{\disc}_X(s;{\color{red}{w}})  \\
&= e^{h(s;{\color{red}{w}} ) } \prod_{p\in X}
\left(
\left(  \prod_{j=0}^{n(p)} (s-j)^{ \left( - \binom{ n(p){{\color{red}{w}}} }{j}   (-1)^{ n(p){{\color{red}{w}}} - j } \right) } \right)^{   
\mu( \Gamma_p^{ {\color{red}{w}} } )
}
\right) .
\end{split}
\end{equation}
Here, for all $p\in X$, 
\begin{equation} \label{local ring}
\Ocal_{X,p}^{\times} = \Z^{n(p)}\times \Gamma_{p},\quad
l(p) := \lcm \{ \ord ( g ) \mid g\in \Gamma_{p} \},
\end{equation}
with $\Gamma_p$ a finite abelian group;
and $h(s;{\color{red}{w}})$ is some entire function of $s$ depending upon ${\color{red}{w}}\in\N$.
\end{theorem}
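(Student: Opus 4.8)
The plan is to locate the poles of the Dirichlet series appearing in \eqref{deformedSoule}, compute their residues, and then recover $\zeta^{\disc}_X(s;w)$ by antidifferentiation and exponentiation. First I would note that specializing the definition \eqref{deformedHI} to $r=1$, $a_1=2$, $s_1=s+1$ (this is the computation behind \eqref{deformedHI-modified}, now kept general in $w$) identifies the right-hand side of \eqref{deformedSoule} with $-\zeta^{HI}(s+1;2;w;X)$. Since \eqref{deformedSoule} characterizes $\zeta^{\disc}_X(s;w)$ only modulo a constant, it then suffices to analyse the meromorphic function $s\mapsto\zeta^{HI}(s+1;2;w;X)$ in detail.

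To that end I would substitute $r=1$, $a_1=2$, $s_1=s+1$ into the explicit presentation \eqref{explictMT} of Theorem~\ref{MT} (valid since $w\in\N$). Using $(1-a_1)^{n(p)w-j}=(-1)^{n(p)w-j}$ and $(k-1+a_1)/l(p)=(k+1)/l(p)$, this gives
\[
\zeta^{HI}(s+1;2;w;X)=\sum_{p\in X}\ \sum_{j=0}^{n(p)w}\binom{n(p)w}{j}(-1)^{n(p)w-j}\sum_{k=1}^{l(p)}\bigl|\Hom_{\Ab}(\Gamma_p,\Z/k\Z)\bigr|^{w}\,l(p)^{-(s+1-j)}\,\zeta\!\left(s+1-j,\tfrac{k+1}{l(p)}\right).
\]
Since the Hurwitz zeta function $\zeta(z,q)$ is meromorphic in $z$ with a single pole, simple, at $z=1$ and residue $1$ regardless of $q$, the right-hand side is meromorphic in $s$ with at worst simple poles, located at the integers $s=j_{0}$ with $0\le j_{0}\le\max_{p\in X}n(p)w$. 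At $s=j_{0}$ only the $j=j_{0}$ terms contribute and $l(p)^{-(s+1-j_{0})}=l(p)^{-1}$, so
\[
\Res_{s=j_{0}}\zeta^{HI}(s+1;2;w;X)=\sum_{p\in X}\binom{n(p)w}{j_{0}}(-1)^{n(p)w-j_{0}}\,\frac{1}{l(p)}\sum_{k=1}^{l(p)}\bigl|\Hom_{\Ab}(\Gamma_p,\Z/k\Z)\bigr|^{w}.
\]
Recalling from \eqref{local ring} that $l(p)=\lcm\{\ord(g)\mid g\in\Gamma_p\}$ and using the evaluation \eqref{muevalw} of $\mu(\Gamma_p^{w})$, the inner average is exactly $\mu(\Gamma_p^{w})$; hence $R_{j_{0}}:=\Res_{s=j_{0}}\zeta^{HI}(s+1;2;w;X)=\sum_{p\in X}\binom{n(p)w}{j_{0}}(-1)^{n(p)w-j_{0}}\mu(\Gamma_p^{w})$.

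Then I would subtract the finitely many principal parts: the function $g(s;w):=\zeta^{HI}(s+1;2;w;X)-\sum_{j_{0}\ge 0}R_{j_{0}}/(s-j_{0})$ is entire in $s$, and \eqref{deformedSoule} reads $\partial_{s}\log\zeta^{\disc}_X(s;w)\equiv -\sum_{j_{0}\ge 0}R_{j_{0}}/(s-j_{0})-g(s;w)$ modulo a constant. Choosing an entire antiderivative $h_{0}(s;w)$ of $-g(s;w)$ and noting $\partial_{s}\log\bigl((s-j_{0})^{-R_{j_{0}}}\bigr)=-R_{j_{0}}/(s-j_{0})$, the product $e^{h_{0}(s;w)}\prod_{j_{0}\ge 0}(s-j_{0})^{-R_{j_{0}}}$ has the same logarithmic derivative as $\zeta^{\disc}_X(s;w)$ modulo a constant, so the two coincide up to a factor $e^{\alpha s+\beta}$, which I absorb by putting $h(s;w):=h_{0}(s;w)+\alpha s+\beta$. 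Regrouping the double product over $p\in X$ and $j_{0}\ge 0$ and truncating the inner product at $j=n(p)w$ (the binomial coefficient vanishes beyond that) yields
\[
\zeta^{\disc}_X(s;w)=e^{h(s;w)}\prod_{p\in X}\left(\left(\prod_{j=0}^{n(p)w}(s-j)^{-\binom{n(p)w}{j}(-1)^{n(p)w-j}}\right)^{\mu(\Gamma_p^{w})}\right),
\]
which is the asserted expression.

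I expect the main obstacle to be the residue computation together with its identification with the arithmetic invariant $\mu(\Gamma_p^{w})$: everything hinges on the period in \eqref{explictMT} being precisely $l(p)=\lcm\{\ord(g)\mid g\in\Gamma_p\}$, so that summing the constant Hurwitz-zeta residues over the residue classes $k=1,\dots,l(p)$ reproduces $\tfrac{1}{l(p)}\sum_{k=1}^{l(p)}|\Hom_{\Ab}(\Gamma_p,\Z/k\Z)|^{w}=\mu(\Gamma_p^{w})$ via \eqref{muevalw}. The surrounding analytic steps --- meromorphy and simplicity of the poles, removal of the principal parts, antidifferentiation, and exponentiation --- are routine once Theorem~\ref{MT} is in hand; the only delicate point there is the branch/constant ambiguity in passing from a logarithmic derivative back to $\zeta^{\disc}_X$, which is harmless precisely because \eqref{deformedSoule} pins down $\zeta^{\disc}_X$ only modulo a constant.
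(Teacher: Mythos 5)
Your proposal follows essentially the same route as the paper's proof: specialize the explicit presentation of Theorem~\ref{MT} to $r=1$, $a_1=2$, $s_1=s+1$, use the single simple pole of the Hurwitz zeta function at $1$ with residue $1$ to locate simple poles of the logarithmic derivative at the integers $j$, identify the residues with $-\binom{n(p)w}{j}(-1)^{n(p)w-j}\mu(\Gamma_p^{w})$ via \eqref{muevalw}, and then exponentiate. You merely spell out the final antidifferentiation/exponentiation step that the paper compresses into ``the claim follows immediately,'' and your upper limit $n(p)w$ in the product is in fact the one consistent with the residue computation (the paper's stated bound $n(p)$ appears to be a typographical slip), so the argument is correct.
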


\begin{proof}
Applying the preceeding results, we analyze the right hand side of \eqref{deformedSoule}:
\begin{equation} \label{SouleReduction}
\begin{split}
&\quad 
- \sum_{p\in X} \sum_{m=1}^{\infty} \Big| \Hom_{\Ab}\left( \Ocal_{X,p}^{\times}, \Z/m\Z \right) \Big|^{{\color{red}{w}}} (m+1)^{-s-1}  \\
&\overset{\eqref{deformedHIAb}}{=}  - \sum_{p\in X}  \zeta_{\Ab}^{HI}\left(s+1,2; {\color{red}{w}}; \Ocal_{X,p}^{\times} \right)   \\
&\overset{\eqref{reduction2}}{=}  - \sum_{p\in X} 
 \sum_{j=0}^{n(p){{\color{red}{w}}}} 
 \binom{ n(p){{\color{red}{w}}} }{j}   (-1)^{ n(p){{\color{red}{w}}} - j }
 l(p)^{ - (s+1 - j) } \sum_{k=1}^{l(p)} G_p(k) ^{{\color{red}{w}}}  
\zeta\left(  s+1-j, \frac{k+1}{l(p)} \right) , 
\end{split}
\end{equation}
where
\begin{equation} \label{Gp}
G_p(k) \overset{\eqref{G}}{=} \Big| \Hom_{\Ab}\left( \Gamma_p, \Z/k\Z \right) \Big| .
\end{equation}
Since the Hurwitz zeta $\zeta(s; q )$ only has a pole of residue $1$ at $s=1$, we see the only singularities of \eqref{SouleReduction} 
are poles at $s = j \in \cup_{p\in X} \{ 0, \cdots, n(p) \}$ with residue
\begin{equation} \label{residue}
\begin{split}
&\quad - \sum_{p\in X} 
 \sum_{j=0}^{n(p){{\color{red}{w}}}} 
 \binom{ n(p){{\color{red}{w}}} }{j}   (-1)^{ n(p){{\color{red}{w}}} - j }
 l(p)^{ - (1) } \sum_{k=1}^{l(p)} G(k) ^{{\color{red}{w}}}   \\
&=  \sum_{p\in X}  \sum_{j=0}^{n(p){{\color{red}{w}}}} 
\left(
- \binom{ n(p){{\color{red}{w}}} }{j}   (-1)^{ n(p){{\color{red}{w}}} - j } 
\frac{\sum_{k=1}^{l(p)}   G(k) ^{{\color{red}{w}}} }{  l(p) } 
\right)  \\
&\overset{\eqref{Gp}}{=}  \sum_{p\in X}  \sum_{j=0}^{n(p){{\color{red}{w}}}} 
\left(
- \binom{ n(p){{\color{red}{w}}} }{j}   (-1)^{ n(p){{\color{red}{w}}} - j } \right) 
\frac{\sum_{k=1}^{l(p)}   \Big| \Hom_{\Ab}\left( \Gamma_p, \Z/k\Z \right) \Big|^{{\color{red}{w}}} }{  l(p) } 
\\
&\overset{\eqref{muevalw}}{=} \sum_{p\in X}  \sum_{j=0}^{n(p){{\color{red}{w}}}} 
\left(
- \binom{ n(p){{\color{red}{w}}} }{j}   (-1)^{ n(p){{\color{red}{w}}} - j } \right) 
\mu( \Gamma_p^{{\color{red}{w}}} )
\end{split}
\end{equation}
Now the claim follows immediately.
\end{proof}







Further restricting to the case $w=1$, we arrive at the following observation,
which may be viewed as an eventual reconciliation of the two approaches of Deitmar-Koyama-Kurikawa \cite{DKK} 
and Connes-Consani \cite{CC}:

\begin{theorem}
the following
expression of the (generalized) Soul\'e zeta function $\zeta_X(s)$ \cite{S} \cite{CC} 
and the modified zeta function $\zeta^{\disc}_X(s)$ 
for a Noetherian $\F_1$-scheme $X$:
\begin{equation*} \label{The expression}
\begin{split}
&\quad\zeta_X(s) = e^{h_1(s)} \zeta^{\disc}_X(s)  \\
&
= e^{h_2(s) } \prod_{p\in X}
\left(
\left(  \prod_{j=0}^{n(p)} (s-j)^{ \left( - \binom{ n(p) }{j}   (-1)^{ n(p) - j } \right) } \right)^{   
{\color{blue}\mu\left( \prod_j \Z/ m_j(p)\Z \right) } }
\right) ,
\end{split}
\end{equation*}
where, for each $p\in X$, 
$
\Ocal_{X,p}^{\times} 
= \Z^{n(p)}\times {\color{blue}\prod_j \Z/ m_j(p)\Z},$ 
and $h_1(s), h_2(s)$ are some entire functions, and, furthermore, for a finite abelian group
$A = \prod_{j=1}^k\left(\Z/n_j\Z\right)$,
\begin{equation*}
{\color{blue}{\mu(A)}} := \sum_{a \in A} \frac{1}{ |a| } =
\frac{1}{{
{\lcm(n_1,n_2,\ldots,n_k)}}}\sum_{l =1}^{{
{\lcm(n_1,n_2,\ldots,n_k)}}} \gcd(l,n_1) \gcd(l,n_2)\cdots \gcd(l, n_k) .
\end{equation*}
\qed
\end{theorem}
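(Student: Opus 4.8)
The plan is to obtain this statement as the $w=1$ specialization of Theorem~\ref{modified meromorphicity}, combined with the Connes--Consani comparison \eqref{equivalent}. First I would invoke \eqref{equivalent}, which already gives $\zeta_X(s) = e^{h_1(s)}\zeta^{\disc}_X(s)$ for some entire $h_1(s)$; hence it suffices to establish the displayed product expression for $\zeta^{\disc}_X(s) = \zeta^{\disc}_X(s;1)$ and then absorb $h_1(s)$ into the final entire prefactor.

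Next I would apply Theorem~\ref{modified meromorphicity} with ${\color{red}{w}}=1$. Since $n(p)\cdot 1 = n(p)$ and $\Gamma_p^{1} = \Gamma_p$, the exponent $\mu(\Gamma_p^{{\color{red}{w}}})$ collapses to $\mu(\Gamma_p)$; writing $\Gamma_p = \prod_j \Z/m_j(p)\Z$ for the torsion subgroup of $\Ocal_{X,p}^{\times}$, the evaluation \eqref{mueval} rewrites $\mu(\Gamma_p)$ as the gcd-sum $\frac{1}{\lcm(m_1(p),\ldots)}\sum_l \prod_j \gcd(l,m_j(p))$. This yields
\[
\zeta^{\disc}_X(s) = e^{h(s;1)} \prod_{p\in X}\left( \left(\prod_{j=0}^{n(p)} (s-j)^{-\binom{n(p)}{j}(-1)^{n(p)-j}}\right)^{\mu\left(\prod_j \Z/m_j(p)\Z\right)}\right),
\]
with $h(s;1)$ entire by Theorem~\ref{modified meromorphicity}. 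Combining with \eqref{equivalent} and setting $h_2(s) := h_1(s) + h(s;1)$ — entire as a sum of entire functions — gives the second display, while $h_1(s) = h_2(s) - h(s;1)$ recovers the stated relation $\zeta_X(s) = e^{h_1(s)}\zeta^{\disc}_X(s)$ between the two. The closed form for $\mu(A)$ when $A = \prod_{j=1}^k (\Z/n_j\Z)$ is exactly \eqref{mueval}.

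The only delicate point is bookkeeping rather than conceptual: the passage from the logarithmic-derivative characterization \eqref{deformedSoule} to a genuine multiplicative expression is only well-defined up to multiplication by $e^{(\text{entire})}$, and one must check that this ambiguity is precisely what $h(s;1)$ records. Concretely, the residue computation \eqref{residue} pins down the exponent of each factor $(s-j)$, so the difference of the logarithmic derivative of the displayed product and the right-hand side of \eqref{deformedSoule} has no poles and no essential singularities, hence is entire; integrating, the two differ by $e^{(\text{entire})}$, which is absorbed into $h(s;1)$. Once this is verified the theorem is immediate.
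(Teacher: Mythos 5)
Your proposal is correct and follows essentially the same route as the paper, which states this theorem with \qed precisely because it is the $w=1$ specialization of Theorem~\ref{modified meromorphicity} combined with the Connes--Consani relation \eqref{equivalent} and the evaluation \eqref{mueval} of $\mu$. Your closing remark about the $e^{(\text{entire})}$ ambiguity in passing from the logarithmic-derivative characterization to the product expression (including the constant discrepancy between \eqref{disczeta} and \eqref{deformedSoule}) is exactly the bookkeeping the paper silently absorbs into $h_2(s)$.
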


\begin{remark}
Since ${\color{blue}{\mu(A)}}$ is not necessarily a natural number, but a rational number in general, 
${\color{blue}\mu\left( \prod_j \Z/ m_j(p)\Z \right) }$ may be regarded as a local contribution
at $p\in X$ of the obstruction for the \lq\lq rationality\rq\rq\ of $\zeta_X(s)$ and $\zeta^{\disc}_X(s)$.
\end{remark}

\begin{acknowledgements} {\rm
The basic idea of the results in this paper were obtained during the author's stay at JAMI2009, Johns Hopkins University, in March 2009,
and presented at NCGOA2009, Vanderbilt University, in May 2009, and at the Fall Meeting of the Mathematical Society of Japan
at Osaka University, in September 2009.  The author would like to express his gratitude to 
Katia Concani, Alain Connes, Jack Morava, Takashi Ono, Steve Wilson, and Guoliang Yu for their hospitalities.
The author also would like to express his gratitude to Nobushige Kurokawa for his work and encouragement.
}
\end{acknowledgements}

\end{document}